\DeclareMathOperator{\Var}{Var}
\newcommand{\bE}{\ensuremath{\mathbb{E}}}
\newcommand{\bN}{\ensuremath{\mathbb{N}}}
\newcommand{\bP}{\ensuremath{\mathbb{P}}}
\newcommand{\bR}{\ensuremath{\mathbb{R}}}
\newcommand{\bZ}{\ensuremath{\mathbb{Z}}}
\newcommand{\ind}{\ensuremath{\mathbbm{1}}}
\newcommand{\norm}[1]{\left\Vert \, #1 \, \right\Vert}
\newcommand{\ddx}[1][1]{\ifnum#1=1 \frac{d}{dx} \else \frac{d^{#1}}{dx^{#1}} \fi}
\newcommand{\ddy}[1][1]{\ifnum#1=1 \frac{d}{dy} \else \frac{d^{#1}}{dy^{#1}} \fi}
\newcommand{\ddt}[1][1]{\ifnum#1=1 \frac{d}{dt} \else \frac{d^{#1}}{dt^{#1}} \fi}
\newtheorem{theorem}{Theorem}[section]
\newenvironment{remark}[1][Remark]{\begin{trivlist}
\item[\hskip \labelsep {\bfseries #1}]}{\end{trivlist}}
\newcommand{\hX}{\widehat{X}}
\newcommand{\hV}{\widehat{V}}
\begin{document}

\title{Markov process representation of semigroups whose generators include negative rates}
\author{F. V\"ollering\footnote{florian.voellering@math.uni-leipzig.de, University of Leipzig}}

\maketitle
\begin{abstract}
Generators of Markov processes on a countable state space can be represented as finite or infinite matrices. One key property is that the off-diagonal entries corresponding to jump rates of the Markov process are non-negative. Here we present stochastic characterizations of the semigroup generated by a generator with possibly negative rates. This is done by considering a larger state space with one or more particles and antiparticles, with antiparticles being particles carrying a negative sign.
\newline
\newline{MSC2010: {60J27, 60J35}}
\end{abstract}

\section{Introduction}

Consider the generator $L$ of a Markov jump process $(X_t)_{t\geq 0}$ on a countable state space $E$. It is characterized by jump rates $r(x,y)$ for jumps from $x$ to $y, x\neq y$, $r(x,x)=0$, and for $f:E\to \bR$
\begin{align}\label{eq:generator}
Lf(x) = \sum_{y\in E}r(x,y)[f(y)-f(x)].
\end{align}
The relationship between the probabilistic process $(X_t)_{t\geq0}$, its semi-group $(P_t)_{t\geq0}$ with $P_tf(x)=\bE_x f(X_t)$ and generator describing the rules for jumps is very fruitful. One essential restriction is that the jump rates are non-negative. If $r(x,y)<0$ is allowed, then \eqref{eq:generator} is still a perfectly valid operator which under reasonable conditions will be the generator of a semi-group $S_t=e^{tL}$, but the probabilistic interpretation is lost. The aim of this note is to recover some probabilistic meaning.

Before we go into the details let us remind us of some basic facts. 
In the probabilistic setting, the generator $L$ is usually characterized via its jump rates $r(x,y)$. If we consider $L$ as matrix, then its off-diagonal entries are given by $r(x,y)$, while the diagonal is given by $-\sum_{y:x\neq y} r(x,y)$. 
The fact that a Markov generator as a matrix has zero sum rows stems form the preservation of mass. The off diagonal entry $r(x,y)$ is the parameter of the exponential waiting time for a jump from $x$ to $y$. When presented with a matrix $A$ where the diagonal entries do not match $-\sum_{y:x\neq y} r(x,y)$ but the off-diagonal entries are non-negative, then the deviation can be split off into a potential $V$, writing 
\begin{align}\label{eq:A}
    Af(x)=(L+V)f(x) = \sum_{y\in E}r(x,y)[f(y)-f(x)] + V(x)f(x)
\end{align}
with $V$ a diagonal matrix and $L$ a Markov generator. The potential term $V(x)$ has the probabilistic interpretation of a branching or killing rate, and the corresponding semi-group has (assuming for simplicity finite row sum norm of $A$)  the explicit probabilistic form
\begin{align}\label{eq:FeynmanKac}
    e^{tA}f(x) = \bE_x f(X_t)e^{\int_0^t V(X_u)du}
\end{align}
with $X_t$ the Markov process generated by $L$. Equation \eqref{eq:FeynmanKac}
 is sometimes referred to as the Feynman-Kac formula. The basic intuition behind this formula is that it represents the expectation of particles moving independently according to $L$, and which are killed or branch into two at rate $|V(x)|$, with negative $V$ implying killing. 

We will build on this intuition to deal with negative jump rates, which should represent both movement and killing. We can consider a regular jump event from $x$ to $y$ via positive rates $r(x,y)>0$ as the killing of a particle at $x$ and creation of a particle at $y$. Correspondingly we will see that a 'jump' event from negative rates $r(x,y)<0$ is in some sense the opposite, the destruction of a particle at $y$ and the creation of one at $x$. This runs into the problem that there might be no particle at $y$ to destroy. We solve this by introducing anti-particles, and consider killing a particle at $y$ the same as creating an anti-particle at $y$. In the following sections we will look at the details, with Theorem \ref{thm:particle-antiparticle} corresponding to a single (anti-)particle like in \eqref{eq:FeynmanKac} and Theorems \ref{thm:branching} and Theorem \ref{thm:annihilation}
giving multi-particle formulations. Section \ref{sec:duality} looks at an application to duality of Markov processes and Section \ref{sec:example} gives the simple example of a double Laplacian.

\section{Switching between particles and antiparticles}
Let us write $r^+(x,y)=\max(r(x,y),0)$ and $r^-(x,y)=\max(-r(x,y),0)$, and consider the Markov process $(\hX_t, Z_t)_{t\geq0}$ on $E\times\{-1,+1\}$ with generator
\begin{align}
\widehat{L}f(x,s) = \sum_{y\in E}r^+(x,y)\left[f(y,s)-f(x,s)\right]+\sum_{y\in E}r^-(x,y)\left[f(y,-s)-f(x,s)\right].
\end{align}
We interpret $\hX_t$ as the position of the Markov process, and $Z_t$ indicates whether it is a particle ($Z_t=+1$) or an anti-particle ($Z_t=-1$). Then the first sum describes just the regular change of position via jumps utilizing the rates $r^+$. The second sum similarly describes movement, but whenever the particle jumps according to the rates $r^-$, the state also changes from particle to anti-particle or vice versa. We can now present a stochastic representation of the semi-group generated by an arbitrary matrix with finite supremum norm.

\begin{theorem}\label{thm:particle-antiparticle}
Let $A$ be of the form 
\[ Af(X) = \sum_{y\in E}r(x,y)[f(y)-f(x)] +V(x)f(x) \]
and assume $\sup_{x\in E}\sum_{y\in E} |r(x,y)| <\infty$, $\sup_{x\in E}|V(x)| <\infty$, $r(x,x)=0$. Then $A$ is a bounded operator w.r.t. the supremum-norm, $S_t=e^{tA}$ is well-defined and for any $f:E\to\bR$ bounded, we have
\begin{align}\label{eq:particle-antiparticle-solution}
S_tf(x) = \bE_{x,+1}\left[Z_t f(\hX_t)e^{2\int_0^t \sum_{y\in E}r^-(\hX_u,y)+V(\hX_u)du }\right].
\end{align}
\end{theorem}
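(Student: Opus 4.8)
The plan is to lift the whole problem to the extended space $E\times\{-1,+1\}$, to recognise the right-hand side of \eqref{eq:particle-antiparticle-solution} as a Feynman--Kac functional of the process $(\hX_t,Z_t)$, and then to intertwine it with $S_t$ through a single algebraic identity on a distinguished ``odd'' subspace. First I would record that everything is bounded: since $\sup_x\sum_y|r(x,y)|<\infty$ and $\sup_x|V(x)|<\infty$, the operator $A$ is bounded on the bounded functions with the supremum norm, so $S_t=e^{tA}$ is a well-defined norm-continuous semigroup; likewise $\widehat L$ has uniformly bounded total jump rate, hence $(\hX_t,Z_t)$ is a non-explosive Markov jump process and $\widehat L$ is a bounded operator. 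Writing $W(x)=2\sum_y r^-(x,y)+V(x)$ for the (bounded) integrand appearing in the exponential of \eqref{eq:particle-antiparticle-solution}, the standard Feynman--Kac argument for bounded generators and bounded potentials (a Duhamel/Dyson-series expansion, or a martingale computation) shows that
\[ U_t h(x,s) := \bE_{x,s}\Big[h(\hX_t,Z_t)\,e^{\int_0^t W(\hX_u)\,du}\Big] \]
defines the bounded semigroup $U_t=e^{t(\widehat L+W)}$, with $W$ acting by multiplication. In particular, taking $h(x,s)=s\,f(x)$, the right-hand side of \eqref{eq:particle-antiparticle-solution} is exactly $U_t h(x,+1)$.

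The heart of the argument is that the \emph{odd} subspace $\mathcal H=\{\,(x,s)\mapsto s\,g(x): g:E\to\bR \text{ bounded}\,\}$ is invariant under $\widehat L+W$. Concretely, writing $h_g(x,s)=s\,g(x)$, I would compute $(\widehat L+W)h_g$ directly: the $r^+$-sum contributes $s\sum_y r^+(x,y)[g(y)-g(x)]$, while the sign-flipping $r^-$-sum contributes $-s\sum_y r^-(x,y)[g(y)+g(x)]$. Collecting the $g(y)$- and $g(x)$-terms and using $r=r^+-r^-$ together with the choice of $W$ yields
\[ (\widehat L+W)h_g(x,s) = s\Big(\sum_y r(x,y)[g(y)-g(x)] + V(x)g(x)\Big) = s\,(Ag)(x) = h_{Ag}(x,s). \]
The one point needing care is the diagonal bookkeeping: because the $r^-$-jump lands with a sign flip, it produces the wrong-sign contribution $-s\,r^-(x,y)g(x)$ on the diagonal, and it is precisely the summand $2\sum_y r^-(x,y)$ in $W$ that is needed to convert this back into the correct Markov diagonal $-(\sum_y r(x,y))\,g(x)$. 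This is the single place where the factor $2$ and the particle/antiparticle mechanism enter, and it is the main (though purely computational) obstacle.

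Finally I would upgrade this infinitesimal identity to the semigroup level. The map $\iota:g\mapsto h_g$ is a bounded linear embedding of bounded functions on $E$ into bounded functions on $E\times\{-1,+1\}$, and the computation above reads $(\widehat L+W)\,\iota=\iota\,A$. Since both generators are bounded, iterating gives $(\widehat L+W)^n\iota=\iota\,A^n$ for every $n$, and summing the exponential power series (which converges in operator norm) produces the intertwining $U_t\,\iota=\iota\,S_t$, i.e. $U_t h_f=h_{S_tf}$, so that $U_t h_f(x,s)=s\,(S_tf)(x)$. Evaluating at $s=+1$ gives $U_t h_f(x,+1)=(S_tf)(x)$, which is exactly \eqref{eq:particle-antiparticle-solution}. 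I expect no deep difficulty here: boundedness of all operators removes every analytic subtlety, so the only real work is pinning down the Feynman--Kac representation of $U_t$ and carrying out the sign bookkeeping in the key identity correctly.
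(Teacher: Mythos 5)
Your proposal is correct and follows essentially the same route as the paper: lift to $E\times\{-1,+1\}$, recognise the right-hand side as the Feynman--Kac semigroup for $\widehat L$ plus the potential $W(x)=2\sum_y r^-(x,y)+V(x)$, and intertwine with $A$ via the odd functions $h_g(x,s)=s\,g(x)$ (your sign bookkeeping, including the role of the factor $2$, is exactly the computation underlying the paper's proof). The only cosmetic difference is how the generator identity $(\widehat L+W)\iota=\iota A$ is upgraded to the semigroup level: you iterate and sum the exponential power series, while the paper observes that $\tilde\phi_t(x,s)=s\,S_tf(x)$ solves the same lifted Cauchy problem and invokes uniqueness --- equivalent steps when all operators are bounded.
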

\begin{proof}
Write $\hat{f}(x,s)=sf(x)$ and $\hV(x,s) = 2\sum_{y\in E}r^-(x,y)+V(x)$. Then the right hand side of \eqref{eq:particle-antiparticle-solution} is the Feynman-Kac formulation of the solution of
\begin{align}
\begin{cases}
\frac{\partial \phi_t}{\partial t}(x,s) = \widehat{L}\phi_t(x,s) + \hV(x,s)\phi_t(x,s),	\\
\phi_0 = \hat{f}.
\end{cases}
\end{align}
On the other hand, $\tilde\phi_t(x,s):=s S_t f(x)$ also satisfies
\begin{align*}
\frac{\partial \tilde\phi_t}{\partial t}(x,s) = s A S_t f(x) = \widehat{L}\tilde\phi_t(x,s) + \hV(x,s)\tilde\phi_t(x,s),
\end{align*}
and since $\tilde\phi_0 = \phi_0$ the claim \eqref{eq:particle-antiparticle-solution} follows.
\end{proof}

\section{Branching particles and antiparticles}\label{sec:branching}
Consider a system of particles $\eta_t^+\in \bN_0^E$ and antiparticles $\eta_t^-\in \bN_0^E$, where $\eta^\pm_t(x)$ is the number of particles/anti-particles at site $x$ and time $t$. These particles move independently with jump rates $r^+(x,y)$. Additionally there is the following branching mechanism: a particle at site $x$ branches into two particles at $x$ and one anti-particle at site $y$ at rate $r^-(x,y)$. The same is true for antiparticles at $x$, which branch into two at $x$ plus a particle at $y$. 
The generator describing the movement and branching of particles is
\begin{align}
L_+^\uparrow f (\eta^+,\eta^-) 
&= \sum_{x,y}r^+(x,y)\eta^+(x)[f(\eta^+ + \delta_y-\delta_x, \eta^-) - f(\eta^+,\eta^-)]	\\
&+ \sum_{x,y}r^-(x,y)\eta^+(x)[f(\eta^+ + \delta_x, \eta^- +\delta_y) - f(\eta^+,\eta^-)]	.
\end{align}
The first line of the generator describes the movement of particles. The rate $r^+(x,y)\eta^+(x)$ is the total rate that one of the $\eta^+(x)$ many particles at $x$ jumps from $x$ to $y$. After this jump there is one less particle at $x$ and one more at $y$, making the new particle configuration $\eta^++\delta_y-\delta_x$. The configuration of anti-particles $\eta^-$ is unchanged. 
The second line describes the branching mechanism, with $r^-(x,y)\eta^+(x)$ the aggregate rate that one of the particles at $x$ turns into two particles at $x$ and one anti-particle at $y$. In total the result is one more particle at $x$ and one more anti-particle at $y$, resulting in the change $(\eta^+,\eta^-) \rightarrow (\eta^++\delta_x, \eta^-+\delta_y)$.

The generator describing the movement and branching of anti-particles is analogous, with the roles of particles and anti-particles reversed:
\begin{align}
L_-^\uparrow f (\eta^+,\eta^-) 
&= \sum_{x,y}r^+(x,y)\eta^-(x)[f(\eta^+, \eta^- + \delta_y-\delta_x) - f(\eta^+,\eta^-)]	\\
&+ \sum_{x,y}r^-(x,y)\eta^-(x)[f(\eta^+ + \delta_y, \eta^- +\delta_x) - f(\eta^+,\eta^-)]	.
\end{align}
The generator $L^\uparrow = L^\uparrow_+ + L^\uparrow_-$  then describes the total system. This system is well-defined under the assumption that $\sup_{x\in E}\sum_{y\in E} |r(x,y)|=M <\infty$, which guarantees that there is no explosion: if $N_t=\sum_x \eta_t^+(x) +\sum_x \eta_t^-(x)$ is the total number of particles and anti-particles in the system, then $N_t$ is dominated by a jump process with jumps from $n$ to $n+2$ at rate $n M$, which leads to exponential growth but no explosion.
Also note that under the dynamics the number $\sum_x \eta_t^+(x) - \sum_x \eta_t^-(x)$ is preserved in time. 
In particular, for the system starting with a single particle at $x$, i.e., $\eta^+_0=\delta_x$ and $\eta^-_0=0$, the sum is always 1.

\begin{theorem}\label{thm:branching}
Assume $\sup_{x\in E}\sum_{y\in E} |r(x,y)| <\infty$. Given $f:E\to\bR$ bounded, define 
\begin{align}
f^\uparrow(\eta^+,\eta^-) = \sum_{x \in E} (\eta^+(x)-\eta^-(x))f(x).
\end{align}
Then the semigroup $S_t$ generated by \eqref{eq:generator} has the stochastic description
\begin{align}\label{eq:branching-solution}
S_tf(x) = \bE_{(\delta_x,0)} f^\uparrow(\eta^+_t,\eta^-_t) .
\end{align}
\end{theorem}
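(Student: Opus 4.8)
The plan is to reduce the statement to a single algebraic identity relating the branching generator $L^\uparrow$ to the (possibly signed) generator $L$, and then to close the argument with a Dynkin/martingale computation. For a bounded $g:E\to\bR$ write $g^\uparrow(\eta^+,\eta^-)=\sum_{x}(\eta^+(x)-\eta^-(x))g(x)$, so that $f^\uparrow$ is the special case $g=f$ and the map $g\mapsto g^\uparrow$ is linear. The key claim I would establish is the duality identity
\[ L^\uparrow g^\uparrow = (Lg)^\uparrow \qquad \text{for all bounded } g, \]
i.e.\ applying the particle generator to the linear observable $g^\uparrow$ is the same as first letting $L$ act on $g$ and then lifting. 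Once this is in hand, the theorem follows essentially by uniqueness of the evolution.

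First I would verify the identity by a direct term-by-term computation. For the particle part $L_+^\uparrow$, a movement jump $\eta^+\to\eta^++\delta_y-\delta_x$ changes $g^\uparrow$ by $g(y)-g(x)$, while a branching event $(\eta^+,\eta^-)\to(\eta^++\delta_x,\eta^-+\delta_y)$ changes it by $g(x)-g(y)$, the new antiparticle at $y$ entering with a minus sign. Summing against the rates gives $\sum_{x,y}\eta^+(x)[r^+(x,y)-r^-(x,y)](g(y)-g(x))=\sum_x\eta^+(x)Lg(x)$, since $r=r^+-r^-$. The antiparticle part $L_-^\uparrow$ is handled the same way; because antiparticles enter $g^\uparrow$ with the opposite sign, every increment flips and one obtains $-\sum_x\eta^-(x)Lg(x)$. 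Adding the two contributions yields $L^\uparrow g^\uparrow=\sum_x(\eta^+(x)-\eta^-(x))Lg(x)=(Lg)^\uparrow$, as claimed. I expect this step to be routine; the only thing to watch is the bookkeeping of signs arising from the antiparticle sign in $g^\uparrow$ and from branching versus movement.

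To conclude, fix $t>0$ and set $F(s,\eta^+,\eta^-)=(S_{t-s}f)^\uparrow(\eta^+,\eta^-)$ for $s\in[0,t]$. Differentiating in $s$ gives $\partial_s F=-(L S_{t-s}f)^\uparrow$, while the duality identity applied to $g=S_{t-s}f$ gives $L^\uparrow F=(L S_{t-s}f)^\uparrow$; hence $(\partial_s+L^\uparrow)F\equiv0$. By Dynkin's formula $M_s:=F(s,\eta_s^+,\eta_s^-)$ is therefore a martingale, and equating $\bE_{(\delta_x,0)}M_0=\bE_{(\delta_x,0)}M_t$ reads $(S_tf)^\uparrow(\delta_x,0)=\bE_{(\delta_x,0)}f^\uparrow(\eta_t^+,\eta_t^-)$. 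Since $(S_tf)^\uparrow(\delta_x,0)=S_tf(x)$, this is exactly \eqref{eq:branching-solution}.

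The main obstacle is analytic rather than algebraic: the observable $g^\uparrow$ is unbounded, growing linearly in the total count $N_s=\sum_x\eta_s^+(x)+\sum_x\eta_s^-(x)$, so $F$ is not bounded and the Dynkin step must be justified by localization along $\tau_n=\inf\{s:N_s\ge n\}$. To pass to the limit I would invoke the domination of $N_s$ by the pure-birth process jumping from $n$ to $n+2$ at rate $nM$ recorded before the theorem, which is non-explosive and satisfies $\bE_{(\delta_x,0)}[N_t]<\infty$. Since $N_s$ is non-decreasing in $s$ and $\supnorm{S_{t-s}f}\le e^{2Mt}\supnorm{f}$, we obtain $\sup_{s\le t}|M_s|\le e^{2Mt}\supnorm{f}\,N_t\in L^1$, so the stopped martingales are uniformly integrable and converge; this simultaneously upgrades $M_s$ to a genuine martingale and shows that the right-hand side of \eqref{eq:branching-solution} is finite. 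The remaining steps are then routine.
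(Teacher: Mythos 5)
Your proof is correct, and it reaches \eqref{eq:branching-solution} by a genuinely different route than the paper. The paper never isolates your intertwining identity $L^\uparrow g^\uparrow=(Lg)^\uparrow$. Instead it uses a probabilistic additivity property: independent copies of the particle system started from single (anti-)particles can be superposed, which gives \eqref{eq:lift-linear} and hence $\bE_{2\delta_x,\delta_y}f^\uparrow-\bE_{\delta_x,0}f^\uparrow=\bE_{\delta_x,0}f^\uparrow-\bE_{\delta_y,0}f^\uparrow$; with this, $u_t(x)=\bE_{\delta_x,0}f^\uparrow(\eta^+_t,\eta^-_t)$ is shown to solve the forward problem $\partial_t u_t=Lu_t$, $u_0=f$, and the theorem follows by uniqueness for the bounded generator $L$. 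Your argument replaces that probabilistic input by a purely algebraic generator computation (which is exactly the duality criterion \eqref{eq:duality-generators} with duality function $H((\eta^+,\eta^-);x)=\eta^+(x)-\eta^-(x)$, very much in the spirit of Section \ref{sec:duality}), and then closes with a backward Dynkin martingale $M_s=(S_{t-s}f)^\uparrow(\eta^+_s,\eta^-_s)$, localization at $\tau_n$, and domination. Comparing the two: the paper's additivity lemma \eqref{eq:lift-linear} does extra duty elsewhere --- it is the key step in Theorem \ref{thm:annihilation} and in the duality theorem of Section \ref{sec:duality} --- so the paper gets more mileage out of its key lemma; your version, on the other hand, is analytically tighter, because the paper's step $\frac{d}{dt}\bE_\cdot f^\uparrow(\eta^+_t,\eta^-_t)=L^\uparrow\bE_\cdot f^\uparrow(\eta^+_t,\eta^-_t)$ applies the backward equation of the particle system to the unbounded observable $f^\uparrow$ without comment, which is precisely the interchange-of-limits issue that your localization along $\tau_n$ together with the bound $\sup_{s\le t}|M_s|\le e^{2Mt}\supnorm{f}\,N_t\in L^1$ is designed to resolve. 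A variant of your domination argument is what one would add to make the paper's differentiation step fully rigorous.
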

\begin{proof}
Let $(\eta^{+,i}_t,\eta^{-,i}_t)_{t\geq 0}$, $i=1,...,n$ be independent realizations of the particle system started at $(\eta^{+,i}_0,\eta^{-,i}_0)$. Then, by the independence of the branching and movement of particles, $(\sum_{i=1}^n \eta^{+,i}_t, \sum_{i=1}^n \eta^{-,i}_t)_{t\geq 0}$ has the same law as a a system started in $(\sum_{i=1}^n \eta^{+,i}_0, \sum_{i=1}^n \eta^{-,i}_0)$. As a consequence, since $f^\uparrow$ is linear in $\eta^+,\eta^-$, and anti-symmetric under exchange of $\eta^+$ and $\eta^-$,
\begin{align}
\bE_{\eta^+_0,\eta^-_0} f^\uparrow(\eta^+_t,\eta^-_t) 
&= \sum_{x}\eta_0^+(x)\bE_{\delta_x,0} f^\uparrow(\eta^+_t,\eta^-_t) + \sum_{x}\eta_0^-(x)\bE_{0,\delta_x} f^\uparrow(\eta^+_t,\eta^-_t)	\\
&= \sum_{x}\eta_0^+(x)\bE_{\delta_x,0} f^\uparrow(\eta^+_t,\eta^-_t) - \sum_{x}\eta_0^-(x)\bE_{\delta_x,0} f^\uparrow(\eta^+_t,\eta^-_t).	\label{eq:lift-linear}
\end{align}
and in particular
\begin{align}
\bE_{2\delta_x,\delta_y} f^{\uparrow}(\eta^+_t,\eta^-_t) - \bE_{\delta_x,0} f^{\uparrow}(\eta^+_t,\eta^-_t) = \bE_{\delta_x,0} f^{\uparrow}(\eta^+_t,\eta^-_t) - \bE_{\delta_y,0} f^{\uparrow}(\eta^+_t,\eta^-_t).
\end{align}
If we write $u_t(x)=\bE_{\delta_x,0}f^{\uparrow}(\eta^+_t,\eta^-_t)$, then 
\begin{align}
\frac{d}{dt} u_t(x) 
&= \left[L^\uparrow \bE_{\cdot}f^{\uparrow}(\eta^+_t,\eta^-_t)\right](\delta_x,0)		\\
&= \sum_{y} r^+(x,y)\left[ \bE_{\delta_y,0} f^{\uparrow}(\eta^+_t,\eta^-_t) - \bE_{\delta_x,0} f^{\uparrow}(\eta^+_t,\eta^-_t) \right]	\\
&\quad+ \sum_{y} r^-(x,y)\left[ \bE_{2\delta_x,\delta_y} f^{\uparrow}(\eta^+_t,\eta^-_t) - \bE_{\delta_x,0} f^{\uparrow}(\eta^+_t,\eta^-_t) \right]	\\
&= \sum_{y} r^+(x,y)\left[ \bE_{\delta_y,0} f^{\uparrow}(\eta^+_t,\eta^-_t) - \bE_{\delta_x,0} f^{\uparrow}(\eta^+_t,\eta^-_t) \right]	\\
&\quad+ \sum_{y} r^-(x,y)\left[ \bE_{\delta_x,0} f^{\uparrow}(\eta^+_t,\eta^-_t) - \bE_{\delta_y,0} f^{\uparrow}(\eta^+_t,\eta^-_t) \right]		\\
&= L u_t(x).
\end{align}
Hence $u_t(x)$ is the unique solution of 
\begin{align}
\begin{cases}
\frac{\partial u_t}{\partial t}(x) = L u_t(x),	\\
u_0 = f(x).
\end{cases}
\end{align}
\end{proof}
\begin{remark}
Theorem \ref{thm:branching} assumes for simplicity and readability that there is no potential. The presence of a potential $V$ like in Theorem \ref{thm:particle-antiparticle} would mean that there is in addition branching and annihilation of particles and antiparticles via
\begin{align}
L^\uparrow_V f(\eta^+,\eta^-) &= \sum_x V^+(x)\eta^+(x) [f(\eta^++\delta_x,\eta^-) -f(\eta^+,\eta^-)] \\
	&+ \sum_x V^+(x)\eta^-(x) [f(\eta^+,\eta^-+\delta_x) -f(\eta^+,\eta^-)] \\
	&+\sum_x V^-(x)\eta^+(x) [f(\eta^+-\delta_x,\eta^-) -f(\eta^+,\eta^-)] \\
	&+ \sum_x V^-(x)\eta^-(x) [f(\eta^+,\eta^--\delta_x) -f(\eta^+,\eta^-)].
\end{align}
meaning both particles and anti-particles individually branch into two at rate $V^+$ or are killed at rate $V^-$.
It can be easily verified that $[L^\uparrow_V \bE_\cdot f^\uparrow(\eta^+_t,\eta^-_t)](\delta_x,0) = V(x)u_t(x)$.
\end{remark}

\section{Branching and annihilating particles and antiparticles}\label{sec:annihilating}
The process in Section \ref{sec:branching} tends to have an exponentially growing number of particles. It turns out that we can introduce annihilation of particles and antiparticles to reduce this number. We do so by letting any pair of particle and antiparticle which are at the same site annihilate at rate $\lambda\in[0,\infty]$, where infinite rate corresponds to instant annihilation. Let
\begin{align}
L^{\uparrow, \lambda}f(\eta^+,\eta^-) = L^\uparrow f(\eta^+,\eta^-) + \lambda\sum_{x}\eta^+(x)\eta^-(x)\left[f(\eta^+-\delta_x,\eta^--\delta_x) - f(\eta^+, \eta^-)\right]
\end{align}
be the generator of the particle system which includes annihilation.
\begin{theorem}\label{thm:annihilation}
Theorem \ref{thm:branching} is also valid when there is annihilation for any $\lambda\in(0,\infty]$.
\end{theorem}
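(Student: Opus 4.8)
The guiding idea is that the functional $f^\uparrow$ is blind to annihilation: a particle and an antiparticle sitting at the same site $x$ contribute $f(x)$ and $-f(x)$ to $f^\uparrow$, so deleting such a pair leaves $f^\uparrow$ unchanged. Hence the annihilation part of the generator kills $f^\uparrow$ outright,
\begin{align}
\lambda\sum_{x}\eta^+(x)\eta^-(x)\left[f^\uparrow(\eta^+-\delta_x,\eta^--\delta_x)-f^\uparrow(\eta^+,\eta^-)\right]=0,
\end{align}
since in each bracket the deleted $+$-particle lowers $f^\uparrow$ by $f(x)$ while the deleted $-$-particle raises it by $f(x)$. Consequently $L^{\uparrow,\lambda}f^\uparrow=L^\uparrow f^\uparrow$ for every $\lambda\in(0,\infty]$, and the entire effect of annihilation vanishes at the level of the observable we care about.

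Next I would record the intertwining identity $L^\uparrow g^\uparrow=(Lg)^\uparrow$ for arbitrary bounded $g$, which is the computation underlying the proof of Theorem \ref{thm:branching} made explicit. A movement transition $\eta^+\to\eta^++\delta_y-\delta_x$ changes $g^\uparrow$ by $g(y)-g(x)$, while a branching transition $(\eta^+,\eta^-)\to(\eta^++\delta_x,\eta^-+\delta_y)$ changes it by $g(x)-g(y)$; weighting by the rates $r^+(x,y)\eta^+(x)$ and $r^-(x,y)\eta^+(x)$ and summing collapses to $\sum_x\eta^+(x)\sum_y r(x,y)(g(y)-g(x))=\sum_x\eta^+(x)(Lg)(x)$. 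The antiparticle generator contributes the sign-reversed term $-\sum_x\eta^-(x)(Lg)(x)$, so that $L^\uparrow g^\uparrow=(Lg)^\uparrow$. Combined with the previous paragraph this gives the crucial relation $L^{\uparrow,\lambda}g^\uparrow=(Lg)^\uparrow$, valid for all $\lambda$.

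With this identity I would conclude \emph{without} the superposition argument of Theorem \ref{thm:branching} — precisely the step that breaks once annihilation couples the independent copies. Writing $P_tg(x):=\bE_{(\delta_x,0)}g^\uparrow(\eta^+_t,\eta^-_t)$, the forward (Dynkin) equation together with $L^{\uparrow,\lambda}g^\uparrow=(Lg)^\uparrow$ gives $\frac{d}{dt}P_tg(x)=\bE_{(\delta_x,0)}(Lg)^\uparrow(\eta_t)=P_t(Lg)(x)$, i.e. the operator identity $\frac{d}{dt}P_t=P_tL$ with $P_0=\id$; since $L$ is bounded in supremum norm and commutes with $e^{tL}$, the unique solution is $P_t=e^{tL}=S_t$, which is \eqref{eq:branching-solution}. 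Equivalently, $s\mapsto(S_{t-s}f)^\uparrow(\eta^+_s,\eta^-_s)$ is a martingale, its drift being $-(LS_{t-s}f)^\uparrow+(LS_{t-s}f)^\uparrow=0$ by the intertwining identity, and one compares its values at $s=0$ and $s=t$. The case $\lambda=\infty$ fits the same scheme: instantaneous removal of coincident pairs also preserves $f^\uparrow$, so the drift from annihilation remains zero (and can alternatively be obtained as the limit $\lambda\to\infty$).

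The genuine obstacle is analytic rather than structural: $f^\uparrow$ grows linearly in the total particle number $N_t=\sum_x(\eta^+_t(x)+\eta^-_t(x))$ and is therefore unbounded, so the forward equation and Dynkin's formula must be justified for it. Here I would invoke the no-explosion estimate of Section \ref{sec:branching}: annihilation only decreases $N_t$, so $N_t$ stays dominated by the pure-birth process with jumps $n\to n+2$ at rate $nM$, whence $\bE_{(\delta_x,0)}N_t\leq e^{2Mt}$. Since $|L^{\uparrow,\lambda}g^\uparrow(\eta)|=|(Lg)^\uparrow(\eta)|\leq\supnorm{Lg}\,N(\eta)$, the bound $\bE_{(\delta_x,0)}\int_0^t|(Lf)^\uparrow(\eta_s)|\,ds<\infty$ holds, which legitimises Dynkin's formula and the interchange of differentiation and expectation. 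Securing this integrability is the only real work; everything else is the algebra above.
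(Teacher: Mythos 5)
Your proof is correct, but it takes a genuinely different route from the paper's. The paper reduces the statement to Theorem \ref{thm:branching}: by \eqref{eq:lift-linear} (the superposition property of the $\lambda=0$ system), the \emph{time-evolved} function $P_t^{\uparrow,0}f^\uparrow$ --- not merely $f^\uparrow$ itself --- is invariant under removal of a coincident particle--antiparticle pair, hence $(L^{\uparrow,\lambda}-L^{\uparrow,0})P_t^{\uparrow,0}f^\uparrow=0$, and a Duhamel-type perturbation argument then forces $P_t^{\uparrow,\lambda}f^\uparrow=P_t^{\uparrow,0}f^\uparrow$. You instead prove the generator-level intertwining $L^{\uparrow,\lambda}g^\uparrow=(Lg)^\uparrow$ for every bounded $g$ and every $\lambda$ (annihilation kills every lifted observable pointwise, and the movement and branching terms recombine via $r^+-r^-=r$), and then solve $\frac{d}{dt}P_t=P_tL$, $P_0=\id$, using uniqueness for the bounded operator $L$. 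Your argument is self-contained: it never invokes Theorem \ref{thm:branching}, but rather re-proves it uniformly in $\lambda\in[0,\infty]$, replacing the superposition argument (which, as you correctly note, genuinely fails once annihilation couples the particles) by the Kolmogorov-equation/martingale computation. You are also more explicit on the one real technical point, the unboundedness of $f^\uparrow$, which you control by domination of $N_t$ by the pure birth process with $\bE N_t\leq e^{2Mt}$ --- an estimate the paper states in Section \ref{sec:branching} but never explicitly wires into this proof. What the paper's route buys is brevity given Theorem \ref{thm:branching}; what yours buys is independence from that theorem and a treatment of all $\lambda$ on an equal footing (with $\lambda=\infty$ handled at the same level of informality as in the paper).
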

\begin{proof}
Write $P_t^{\uparrow,\lambda}f(\eta^+,\eta^-)=\bE_{\eta^+,\eta^-}f(\eta^+_t,\eta^-_t)$ for the semigroup generated by $L^{\uparrow,\lambda}$, with $\lambda=0$ being the system without annihilation. By \eqref{eq:lift-linear}, if $\eta^+(x)>0$ and $\eta^-(x)>0$, 
\[ P_t^{\uparrow,0} f^\uparrow(\eta^+,\eta^-) = P_t^{\uparrow,0} f^\uparrow(\eta^+-\delta_x,\eta^--\delta_x).	\]
Hence
\begin{align}
(L^{\uparrow,\lambda}-L^{\uparrow,0}) P_t^{\uparrow,0} f^\uparrow(\eta^+,\eta^-) = 0
\end{align}
and it follows that $P_t^{\uparrow,\lambda}f^{\uparrow} = P_t^{\uparrow,0}f^{\uparrow}$.
\end{proof}

\section{Applications to duality of Markov processes}\label{sec:duality}
A very brief introduction to duality of Markov processes is as follows. Two Markov processes $(X_t)_{t\geq0}$ and $(Y_t)_{t\geq0}$ on state spaces $E$ and $F$ are said to be dual with duality function $H:E\times F\to \bR$, if for all $x\in E$ and $y\in F$,
\begin{align}\label{eq:duality-def}
\bE_x H(X_t;y) = \bE_y H(x ;Y_t).
\end{align}
A sufficient condition is that the generators $L_X$ and $L_Y$ satisfy
\begin{align}\label{eq:duality-generators}
[L_X H(\cdot; y)](x) = [L_Y H(x; \cdot)](y), \quad\forall\;x\in E, y\in F.
\end{align}
Duality has proven fruitful in many applications. For a survey on duality, see \cite{JANSEN:KURT:14}. The challenge with duality is that given a Markov process $X_t$ of interest, how to find a Markov process $Y_t$ and duality function $H$ so that \eqref{eq:duality-def} holds. One can make an educated guess on $H$, and then find a generator $L_Y$ which satisfies \eqref{eq:duality-generators}. Or one can use symmetries of $L_X$ to identify a suitable Lie algebra representation whose building blocks can build $L_X$, and then find a dual representation, which then allows to build $L_Y$, see \cite{GIARDINA:KURCHAN:REDIG:VAFAYI:2009} and  \cite{STURM:SWART:VOELLERING:2018} for an introduction to this method. However, neither method guarantees that the dual generator $L_Y$ is actually a Markov generator. If $F$ is countable, as is the case in many applications of duality, then $L_Y$ can be represented as a finite or infinite matrix. A stochastic representation of the semigroup generated by such an $L_Y$ is desirable, and with Theorem \ref{thm:particle-antiparticle}, Theorem \ref{thm:branching} or Theorem \ref{thm:annihilation} this is possible. 
\begin{theorem}
Assume that there is a duality function $H$ and generator $L_Y$ satisfying \eqref{eq:duality-generators}, with $F$ countable. Further assume that the matrix representation of $L_Y$ has row sums 0, so that it can be written in the form of \eqref{eq:generator}, and $\sup_{y\in F}\sum_{z\in F}|r(y,z)|<\infty$. Then the Markov process $(X_t)_{t\geq 0}$ is dual to the process $(\eta_t^+,\eta_t^-)_{t\geq 0}$ with duality function 
\[ H^\uparrow(x;\eta^+,\eta^-) = \sum_{y\in F} (\eta^+(y)-\eta^-(y))H(x;y).	\]
Here $(\eta_t^+,\eta_t^-)_{t\geq 0}$ is the branching (and annihilating) particle system introduced in sections \ref{sec:branching} and \ref{sec:annihilating}, with arbitrary annihilation rate $\lambda\in[0,\infty]$. In other words
\begin{align}\label{eq:duality}
\bE_x H^\uparrow(X_t;(\eta^+,\eta^-)) = \bE_{\eta^+,\eta^-} H^\uparrow(x;(\eta_t^+,\eta^-_t)).
\end{align}
\end{theorem}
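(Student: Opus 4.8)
The plan is to derive the duality relation \eqref{eq:duality} by combining the generator duality hypothesis \eqref{eq:duality-generators} with the stochastic representation of Theorem \ref{thm:branching} (or Theorem \ref{thm:annihilation}). The central observation is that $H^\uparrow(x;\eta^+,\eta^-)$ is, for each fixed $x$, exactly of the lifted form $f^\uparrow$ from Theorem \ref{thm:branching} applied to the function $y\mapsto H(x;y)$ on $F$. Since $H(x;\cdot)$ is a bounded function on $F$ (this I would need to assume, or it follows from the finiteness conditions in the relevant applications), Theorem \ref{thm:branching} tells us that
\begin{align}
\bE_{\eta^+,\eta^-} H^\uparrow(x;\eta^+_t,\eta^-_t) = \left(e^{tL_Y}H(x;\cdot)\right)(\eta^+,\eta^-)\Big|_{\text{lifted}},
\end{align}
or more precisely, for the single-particle initial condition $(\delta_y,0)$, the right-hand side of \eqref{eq:duality} reduces to $S_t^Y H(x;\cdot)(y)$ where $S_t^Y$ is the semigroup generated by $L_Y$. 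The linearity and antisymmetry of $H^\uparrow$ in $(\eta^+,\eta^-)$, already exploited in \eqref{eq:lift-linear}, then extend this from single-particle initial data to arbitrary $(\eta^+,\eta^-)$.

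First I would reduce to the single-particle case: by the linearity identity \eqref{eq:lift-linear} it suffices to prove \eqref{eq:duality} when the particle system starts from $(\delta_y,0)$ for a single $y\in F$, since both sides of \eqref{eq:duality} are linear and antisymmetric in the initial configuration $(\eta^+,\eta^-)$. For that initial condition, Theorem \ref{thm:branching} applied on the space $F$ with generator $L_Y$ gives $\bE_{\delta_y,0}H^\uparrow(x;\eta^+_t,\eta^-_t) = S_t^Y H(x;\cdot)(y)$, where $S_t^Y = e^{tL_Y}$. Meanwhile the left side of \eqref{eq:duality}, with the same single-particle configuration, is $\bE_x H^\uparrow(X_t;\delta_y,0) = \bE_x H(X_t;y) = S_t^X H(\cdot;y)(x)$.

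The heart of the argument is then the classical duality propagation: the generator relation \eqref{eq:duality-generators} says $L_X$ acting on the first variable of $H$ equals $L_Y$ acting on the second. The standard way to promote this to the semigroup level is to differentiate $s\mapsto S_s^X H(\cdot;\cdot) S_{t-s}^Y$ and check the derivative vanishes, so that $S_t^X H(\cdot;y)(x) = S_t^Y H(x;\cdot)(y)$; this is exactly the content I would cite or reproduce, noting that both semigroups are bounded on the relevant function spaces under the stated finiteness assumption $\sup_y \sum_z |r(y,z)| < \infty$, which makes $L_Y$ a bounded operator so that the interpolation is rigorous. Combining the three displays establishes \eqref{eq:duality} for single-particle data, and the reduction step finishes the general case. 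For the annihilating system with $\lambda\in(0,\infty]$, Theorem \ref{thm:annihilation} guarantees $\bE_{\delta_y,0}H^\uparrow(x;\eta^+_t,\eta^-_t)$ is unchanged by the annihilation, so the same identity holds verbatim.

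The main obstacle I anticipate is the rigorous justification of the semigroup-level duality \eqref{eq:duality-generators} $\Rightarrow$ \eqref{eq:duality-def} for $L_X$, since $L_X$ need not be bounded and $X_t$ lives on a possibly uncountable space $E$; one must ensure $H(\cdot;y)$ lies in a suitable domain and that the interpolation derivative argument is valid, which typically requires either boundedness of $H$ and its image under the semigroups or a core/approximation argument. The $F$-side is comparatively painless because the finiteness hypothesis renders $L_Y$ bounded, but matching regularity on the $E$-side is where care is needed; in practice one would assume $H(\cdot;y)$ is bounded and in the domain of $L_X$ with $\bE_x H(X_t;y)$ well-behaved, which is the implicit standing assumption in \eqref{eq:duality-def} itself.
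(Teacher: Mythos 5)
Your proposal is correct and follows essentially the same route as the paper's proof: reduce to single-particle initial data $(\delta_y,0)$ via the linearity/antisymmetry identity \eqref{eq:lift-linear}, identify $\bE_{\delta_y,0} H^\uparrow(x;(\eta_t^+,\eta_t^-))$ with $[S_t H(x;\cdot)](y)$ via Theorem \ref{thm:branching}, promote the generator relation \eqref{eq:duality-generators} to the semigroup level to handle the $X$-side, and dispose of the annihilation case via Theorem \ref{thm:annihilation}. The only differences are presentational: you spell out the interpolation argument behind the generator-to-semigroup step and flag the implicit boundedness of $H(x;\cdot)$ needed for Theorem \ref{thm:branching}, both of which the paper treats as standing assumptions.
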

\begin{proof}
By the proof of Theorem \ref{thm:annihilation} the right hand side of \eqref{eq:duality} does not depend on the annihilation rate, so we can restrict ourself to the case of no annihilation.
By \eqref{eq:duality-generators} we have $\bE_x H(X_t;y) = [S_t H(x ;\cdot)](y)$, where $S_t$ is the semigroup generated by $L_Y$. Then, by Theorem \ref{thm:branching}, we have 
\begin{align}
\bE_x H^\uparrow(X_t;(\delta_y,0)) = \bE_x H(X_t;y) = [S_t H(x ;\cdot)](y) = \bE_{\delta_y,0} H^\uparrow(x;(\eta_t^+,\eta^-_t)).
\end{align}
Finally, with \eqref{eq:lift-linear} we can extend the above from $(\delta_y,0)$ to arbitrary starting configurations.
\end{proof}

\section{Example: Double Laplacian on the integers}\label{sec:example}
Let $\Delta f(x) = \frac12f(x+1)-f(x)+\frac12f(x-1)$ be the discrete Laplacian on $\bZ$. Then the double Laplacian is given by
\begin{align}
\Delta \Delta f(x) &= \frac14(f(x+2)-f(x)) + \frac14(f(x-2)-f(x)) \\
& \quad - (f(x+1)-f(x)) - (f(x-1)-f(x)),
\end{align}
which is of the form \eqref{eq:generator} with negative rates. Let $S_t$ be the semigroup generated by the double Laplacian $\Delta\Delta$. We will apply Theorem \ref{thm:particle-antiparticle}. So let $\hX$ be the random walk on $\bZ$ which performs the jumps $\pm1$ at rate 1 and $\pm2$ at rate $\frac14$. Since jumps using the rates $r^{-}$ involve flipping the sign of $Z_t$, we have that $Z_t=(-1)^{N_t}$, where $N_t$ is the number of nearest neighbour jumps performed by $\hX_t$. Note that $N_t$ is even iff $\hX_t-\hX_0$ is even. Hence 
\begin{align}\label{eq:double-laplace-Z_t}
Z_t=2\ind_{N_t\text{ is even}} -1 = 2\ind_{\hX_t-\hX_0\text{ is even}}-1.
\end{align}
Finally we observe that by spatial homogeneity $\sum_{y}r^-(x,y) = 2$. 
By Theorem \ref{thm:particle-antiparticle},
\begin{align}\label{eq:double-laplace-solution}
S_t f(x) = e^{4t}\bE_{x}\left(Z_t f(\hX_t)\right).
\end{align}
Note that $N_t$ is Poisson($2t$)-distributed, and therefore $\bP(N_t \text{ is even})=\frac12(1+e^{-4t})$ and $\bE Z_t = e^{-4t}$. Alternatively, $\bE Z_t = e^{-4t}$ follows from \eqref{eq:double-laplace-solution} applied to the constant function $\mathbf{1}$, since $S_t \mathbf{1} =\mathbf{1}$.
For a more complex example consider $f$ of the form $f(x)=g(x)\ind_{x\text{ is even}}$. Then, by \eqref{eq:double-laplace-Z_t} and \eqref{eq:double-laplace-solution},
\begin{align}
S_t f(x) = \begin{cases}
\frac12(e^{4t}+1)\bE_x\big[g(\hX_t)\big|\hX_t \text{ even}\big],	\quad&x \text{ even};	\\
-\frac12(e^{4t}-1)\bE_x\big[g(\hX_t)\big|\hX_t \text{ odd}\big],\quad&x \text{ odd}.
\end{cases}
\end{align}
The conditional expectations are reasonably well approximated by integrating $g$ against a normal distribution with variance $\Var(\hX_t)=4t$ assuming $g$ is smooth enough and $t$ not too small.

\section{Example of all rates negative}
Consider the operator of the form \eqref{eq:A} with all $r(x,y)\leq 0$. We make the simplifying assumption that there are constants $\lambda_1,\lambda_2$ so that $\sum_y r^-(x,y) = \lambda_1$ and $V(x)=\lambda_2$ for all $x$. Then, by Theorem \ref{thm:particle-antiparticle},
\begin{align}
&e^{-(2\lambda_1+\lambda_2)t} e^{At}f(x) = \bE_{x,+1} \left( Z_t f(\hX_t) \right) \label{eq:decay-example1}\\
&= \bE_{x,+1} \left[ f(\hX_t)\;\middle|\;N_t \text{ even}\right] \bP(N_t \text{ even}) - \bE_{x,+1} \left[ f(\hX_t)\;\middle|\;N_t \text{ odd}\right] \bP(N_t \text{ odd}),
\end{align}
where $N_t$ counts the number of jumps of $\hX_t$. Since $\sum_y r^-(x,y) = \lambda_1$ it follows that $N_t$ is Poisson$(\lambda_1 t)$-distributed and $\bP(N_t \text{ even})=\frac12(1+e^{2\lambda_1 t})$. 

If we assume that $\hX_t$ has a stationary distribution $\mu$  then it is reasonable to write 
\begin{align}
 \bE_{x,+1} \left[ f(\hX_t)\;\middle|\;N_t \text{ even}\right] &= \mu(f) + b_t^e(x);	\\
 \bE_{x,+1} \left[ f(\hX_t)\;\middle|\;N_t \text{ odd}\right] &= \mu(f) + b_t^o(x).
\end{align}
If $\hX_t$ is converging exponentially fast to $\mu$ the error terms $b_t^e(x)$ and $b_t^o(x)$ will be decaying exponentially at some rate $0\leq\nu\leq \lambda_1$ (if the Markov process is on a bipartite graph $\nu$ can be 0 even if convergence to $\mu$ is exponentially fast, and the rate is no larger than $\lambda_1$ since that the exit rate for a single site). Then, continuing from \eqref{eq:decay-example1},
\begin{align}\label{eq:decay-example2}
&e^{At}f(x) = e^{(2\lambda_1+\lambda_2)t}\frac{b_t^e(x)-b_t^o(x)}2 +e^{\lambda_2t}\left(\mu(f) + \frac{b_t^e(x)+b_t^o(x)}2\right).
\end{align}
 Therefore typically the first term is the dominant term, and $|e^{At}f|$ grows at rate at most $2\lambda_1+\lambda_2-\nu$, which is slower than what \eqref{eq:decay-example1} or the supremum norm $\norm{A}_\infty=|\lambda_1+\lambda_2|+\lambda_1$ suggest. More details depend on a more sophisticated analysis using specifics of $f$ and $\hX_t$, for example by finding cancellations in $b_t^e(x)-b_t^o(x)$.

\bibliography{BibCollection}{}
\bibliographystyle{amsplain}

\end{document}